\newtheorem{proposition}{Proposition}
\newtheorem{theorem}{Theorem}
\newtheorem{lemma}{Lemma}
\newtheorem{remark}{Remark}
\newtheorem{assumption}{Assumption}
\numberwithin{equation}{section}
\DeclareMathOperator{\diver}{div}
\DeclareMathOperator\Ker{Ker}
\DeclareMathOperator\re{Re}
\DeclareMathOperator\init{init}
\DeclareMathOperator\myin{in}
\newcommand{\p}{\partial}
\title{Exponential convergence to equilibrium for a two-speed model with variant drift fields via the resolvent estimate }
\author{Xu'an Dou\thanks{Beijing International Center for Mathematical Research, Peking University, Beijing, 100871, China (dxa@pku.edu.cn)}\,\, and \,\, Zhennan Zhou\thanks{Beijing International Center for Mathematical Research, Peking University, Beijing, 100871, China (zhennan@bicmr.pku.edu.cn).}}
\begin{document}
	\maketitle

\begin{abstract}
   We study a two-speed model with variant drift fields, which generalizes the Goldstein-Taylor model  but the two advection fields are not necessarily in proportion. Due to the lack of a local equilibrium structure of the steady state, prevailing hypocoercivity methods  could not be directly applied to such a system. To prove the exponential convergence to equilibrium for this model, we use a recent generalization of the Gearhart-Pr\"uss theorem [Sci. China Math. 64 (2021), no. 3, 507–518] to gain semigroup bounds, where the relative entropy estimate plays a central role in gaining desired resolvent estimates via a compactness argument. 
\end{abstract}

{\bf Key words:} two-speed model, resolvent estimate, exponential convergence, non-separable velocity field

{\bf AMS subject classifications
:} 		35B40, 35Q92, 47A10, 82C05. 

\section{Introduction}

We consider the following 
two-speed model with variant advection fields
\begin{equation}\label{eq:system}
    \begin{cases}
    \p_t p_1+\p_x(b_1(x)p_1)=-p_1+p_2,&  x\in(0,1),\,t>0,\\
        \p_t p_2+\p_x(b_2(x)p_2)=p_1-p_2, &x\in(0,1),\,t>0,\\
        b_i(0)p_i(t,0)=b_i(1)p_i(t,1), & i=1,2,\,t>0,\\
        p_i(0,x)=p_{i,\init}(x),& x\in[0,1],\,i=1,2.
    \end{cases}
\end{equation}
Here $p_i(t,x)$ is regarded as the population density of particles of type $i$ at position $x$ and time $t$, in a kinetic description. Each particle is characterized by its spatial coordinate $x\in[0,1]$, and its type $i\in\{1,2\}$. And particles of different types are driven by different velocity fields, namely $b_1(x)$ and $b_2(x)$. The reaction terms on the right hand side indicate that two types of particles can change to each other with a constant transition rate. The boundary condition means that the mass fluxes going out of the domain $[0,1]$ from one side will go back to the domain from the other side, therefore the total mass $$\sum_{i=1,2}\int_0^{1}p_i(t,x)dx$$ is conserved in time. 

We assume that two velocity fields $b_1(x)$ and $b_2(x)$ are both non-degenerate and different from each other in the following sense
\begin{assumption}\label{as:b}
We assume $b_1,b_2\in C^1[0,1]$ 
satisfying 
\begin{equation}
    b_i(x)\neq 0,\quad \forall x\in[0,1],\,i=1,2,
\end{equation} and there exists some $x^*\in[0,1]$ such that
\begin{equation}
    b_1(x^*)\neq b_2(x^*).
\end{equation}
\end{assumption}

We aim to characterize the long time behavior of system \eqref{eq:system}, which is the exponential convergence to the unique steady state. The case $b_1\equiv1,b_2\equiv-1$, widely referred to as the Goldstein-Taylor model, has been studied from various aspects (e.g. \cite{arnold2021large,dolbeault2015hypocoercivity,arnold2020sharpening} and references therein). In this work, however, we are interested in the general scenario including (but not limited to) the cases that $b_1(x)/b_2(x)$ is not a constant.


\paragraph{Motivating Examples} Our primary motivation is a Fokker-Planck equation from neuroscience proposed in \cite{cai2004effective,cai2006kinetic}, which describes the collective behavior of a large number of biological neurons. Each neuron within the target system is characterized by two variables: its voltage (membrane potential) $x$ and conductance $g$. 
In the simplest form,
the Fokker-Planck equation is linear, and is given by
\begin{equation}\label{eq:neuron}
    \p_tp+\p_x(b(x,g)p)=\p_g((g-g_{\myin})p)+\p_{gg}p,\quad x\in(0,1),\,g>0,\,t>0,
\end{equation} where $g_{\myin}>0$ is a constant and the velocity field  $b(x,g)$ is given by
\begin{equation}\label{vel-neuron}
    b(x,g):=-g_Lx+g(V_E-x),
\end{equation} which is a combination of two effects. The first term $-g_Lx$, with a constant $g_L>0$, drives the voltage to its resting potential (which is set to be zero here). Physically it models the leaky effect. The second term $g(V_E-x)$, with $V_E>1$, stimulates the voltage of a neuron to increase, so that it may reach the firing threshold $x=1$. 
If the voltage of a neuron arrives at the firing threshold $x=1$, its value is reset to $0$ immediately, which accounts for the firing-resetting mechanism of  neurons.

Notice that, a firing event could happen only when $b(1,g)>0$, which means the conductance $g$ has to exceed a certain value, i.e., $g>(V_E-1)/g_L$. In this case we assign the following boundary condition
\begin{equation}
    b(0,g)p(t,0,g)=b(1,g)p(t,1,g),\quad g>(V_E-1)/g_L,\, t>0.
\end{equation}
On the other hand, if $g\leq (V_E-1)/g_L$, then $b(1,g)\leq 0$, no neurons at this conductance can reach the firing threshold $x=1$, and therefore no neurons would be reset at $x=0$. In this case the following boundary condition is imposed instead
\begin{equation}
    p(t,0,g)=p(t,1,g)=0,\quad 0<g\leq(V_E-1)/g_L,\,t>0.
\end{equation} Note that in both cases the fluxes at $x=0$ and $x=1$ are equal, i.e., 
$$b(x,g)p(t,x,g) \bigr \vert_{x=0}^{x=1}=0,\quad g>0,\, t>0.$$ Finally, the problem \eqref{eq:neuron} is completed by a simple no-flux boundary condition at $g=0$ 
\begin{equation*}
    (g-g_{\myin})p+\p_gp=0,\quad g=0,\, x\in(0,1),\, t>0.
\end{equation*}

System \eqref{eq:neuron} is a kinetic description of neuron activities and the diffusion is only in the $g$ direction, which is analogous to the velocity variable in typical kinetic equations, and the voltage variable $x$ is reminiscent of the position variable.

First rigorous analysis on this kinetic neuron system \eqref{eq:neuron} is given in \cite{perthame2013voltage}, simplified models are subsequently analyzed in \cite{perthame2018derivation,kim2021fast}, and extensive numerical exploration has been carried out in \cite{caceres2011numerical}. However, due to the difficulties which we shall discuss later in this introduction, no result on the exponential convergence to the steady state has been obtained yet. Clearly, the two-speed model \eqref{eq:system} can be viewed as a simplification of the Fokker-Planck equation \eqref{eq:neuron} where the diffusion in the conductance variable is replaced by a reversible process between two types particles convecting in different velocity fields.

A second motivation is one preliminary attempt to extend the hypoelliptic advection-diffusion equation in \cite{bedrossian2017enhanced} from shear flows to more general flows. We consider the following advection-diffusion equation for $p(t,x,y)$ with $(x,y)\in[0,1]^2$,
\begin{equation}\label{eq:fluid}
    \begin{cases}
    \p_tp+\p_x(b(x,y)p)=\p_{yy}p,\quad &(x,y)\in(0,1)^2,\,t>0,\\
    b(1,y)p(t,1,y)=b(0,y)p(t,0,y),\quad &y\in(0,1),\,t>0,\\
    \p_yp(t,x,0)=\p_yp(t,x,1)=0,\quad &x\in(0,1),\, t>0.
    \end{cases}
\end{equation}
In the shear flow case $b(x,y)\equiv b(y)$, such a system is proposed and analyzed in \cite{bedrossian2017enhanced} to study the enhanced dissipation effects. This shear flow case has also been analyzed in \cite{albritton2021enhanced} recently using H\"{o}rmander's hypoellipicity theory, whose method applies to more general diffusion term like $\p_y(a(y)\p_y\cdot)$ or $\diver(A(y)\nabla\cdot)$ instead of $\p_{yy}$. For an incompressible flow, such as the shear flow, in recent years much understanding has been obtained on its role in such models. But on compressible flows, the related literature seems to be relatively sparse \cite{coti2020homogenization}.



\paragraph{Difficulties}
The above three systems \eqref{eq:system},\eqref{eq:neuron} and \eqref{eq:fluid} can be written into a general form of kinetic equations
\begin{equation}\label{eq:general}
    \p_tp+\mathsf{T}p=\mathsf{C}p,\quad t>0,
\end{equation} where $\mathsf{T}$ denotes a transport operator, $\mathsf{C}$ denotes a collision operator, and certain proper boundary conditions are prescribed. 

In typical kinetic equations such as the Boltzmann equation, the kernel of the collision operator $\mathsf{C}$ is called local equilibriums, and the global equilibrium is in the intersection of the kernels of $\mathsf{T}$ and $\mathsf{C}$. A common difficulty in analyzing the long time behavior of \eqref{eq:general} is its hypocoercivity structure \cite{villani2009hypocoercivity}. Typically the collision operator $\mathsf{C}$ only gives dissipation in partial directions (like the $i$ direction in \eqref{eq:system}, the $g$ direction in \eqref{eq:neuron} and the $y$ direction in \eqref{eq:fluid}), while the transport operator $\mathsf{T}$ itself does not give dissipation. Therefore the operator $\mathsf{T}-\mathsf{C}$ is not coercive with respect to the common $L^2$ norm. And the exponential convergence to the global equilibrium can (only) be obtained via exploring the interplay between $\mathsf{T}$ and $\mathsf{C}$. Such convergence can be obtained via compactness arguments in a non-constructive way in some cases (e.g. \cite{ukai1974existence,bernard2013exponential}). In the last two decades, various hypocoervity methods have been developed to obtain a constructive convergence rate, therefore giving a quantitative characterization of the long time behavior of kinetic equations \cite{arnold2014sharp,dolbeault2015hypocoercivity,villani2009hypocoercivity,armstrong2019variational,baudoin2017bakry}. 

In contrast to other kinetic models, a prominent feature shared by the systems \eqref{eq:system}, \eqref{eq:neuron} or \eqref{eq:fluid} is that the  velocity field is not in a ``separable form''. For example, in \eqref{eq:neuron} the velocity field $b(x,g)=-g_Lx+g(V_E-x)$ can not be written as $b(x,g)=\theta(x)\zeta(g)$, while in most familiar kinetic models one  simply  has $b(x,g)=g$, which is naturally in a separable form. For the two-speed model  \eqref{eq:system}, the velocity field is in a separable form only when $b_i(x)=\theta(x)\zeta(i)$, which is equivalent to $b_1(x)/b_2(x)$ is simply a constant, and such examples include the classical Goldstein-Taylor model when $b_1\equiv1,b_2\equiv-1$. However, in the paper, we aim to study the general two-speed model, where the two velocity fields only need to satisfy the non-degenerate and distinguishable conditions given in Assumption \ref{as:b}. 

The non-separable velocity field  brings new difficulties for understanding the long time asymptotic behavior of the two-speed model in various and yet related aspects.
First and most directly, one can not represent the solution by the method of separation of variables via the Fourier transform in $x$, or more generally, using the eigenfunctions of the operator $\p_x \bigl (\theta(x)\cdot\bigr )$.

Secondly, due to the non-separable velocity fields, the steady states of the systems \eqref{eq:system},\eqref{eq:neuron} or \eqref{eq:fluid} are much more complicated. In particular, the  local equilibrium structure does not exist: the global steady state is no longer in the kernel of both the collision operator $\mathsf{C}$ and the transport operator $\mathsf{T}$. In fact, one can check that in the two-speed model \eqref{eq:system}, the intersection of the kernels of $\mathsf{T}$ and $\mathsf{C}$ contains only trivial solutions (zero) unless $b_1(x)/b_2(x)$ is a constant. Therefore, the global steady state is essentially determined by the integrated combination of the transport operator $\mathsf{T}$, the collision operator $\mathsf{C}$ and (last but not least) the boundary conditions. Thus, due to the lack of a local equilibrium structure and additional challenges that the boundary conditions bring in, it is not clear yet how the prevailing hypocoercivity methods could  be applied to this two-speed model \eqref{eq:system}, although it appears in such a simple form.  

It is worth emphasizing that the challenges in analyzing the two-speed model \eqref{eq:system} already capture the essential difficulties of the kinetic model \eqref{eq:general} with a non-separable velocity field, although more complicated models may face additional difficulties such as  the lack of regularity properties of the steady states. However, the studies on other kinetic models, such as \eqref{eq:neuron} and \eqref{eq:fluid}, are beyond the scope of the current paper. See also the discussions in Section 4.

\paragraph{Main results and methods} In this work, we prove the exponential convergence to the unique global steady state for the two-speed model \eqref{eq:system}. The main results are summarized in the following theorem. 

\begin{theorem}[Main result]\label{thm:main}
Consider the two-speed model  \eqref{eq:system}, where $b_1,\,b_2$ satisfy Assumption \ref{as:b}.  
\begin{enumerate}
    \item There exists a positive steady state $p_{\infty}=(p_{1,\infty},\,p_{2,\infty})$ of \eqref{eq:system} which is unique subject to the normalization condition $\int_0^1(p_{1,\infty}(x)+p_{2,\infty}(x))dx=1$.
    \item For initial data $p_{\init}=(p_{1,\init},\,p_{2,\init})\in (L^2[0,1])^2$, there exists a unique (mild) solution of \eqref{eq:system} which converges to the steady state exponentially. More precisely, there exists $C=e^{\pi/2}>1,\,\alpha>0$ such that 
\begin{equation}\label{main-result}
\left\|p(t)-\Pi p(t)\right\|\leq Ce^{-\alpha t}\left\|p(0)-\Pi p(0)\right\|,\quad \forall t\geq0,
\end{equation}
where $$\Pi p(t):=\left(\int_0^1(p_1(t,w)+p_2(t,w))dw\right)p_{\infty}=\left(\int_0^1(p_{1,\init}(w)+p_{2,\init}(w))dw\right)p_{\infty}.$$
The norm $\|\cdot\|$ in \eqref{main-result} is defined in \eqref{def-inner} using the steady state $p_{\infty}$, and is equivalent to the usual $L^2$ norm.
\end{enumerate}
\end{theorem}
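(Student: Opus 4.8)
The plan is to prove the two assertions separately, handling the steady state by direct ODE analysis and the convergence by a uniform resolvent estimate fed into the generalized Gearhart--Pr\"uss theorem. \emph{Steady state.} Adding the two stationary equations gives $\p_x(b_1p_{1,\infty}+b_2p_{2,\infty})=0$, so the stationary flux $b_1p_{1,\infty}+b_2p_{2,\infty}\equiv J_0$ is constant; the flux boundary condition then imposes a single compatibility relation. Substituting this back reduces the stationary system to a scalar first-order linear ODE for, say, $p_{1,\infty}$, whose solution is explicit and unique up to scaling. I would fix the scaling by the normalization $\int_0^1(p_{1,\infty}+p_{2,\infty})\,dx=1$ and establish positivity either from the explicit formula or from a maximum-principle/Perron--Frobenius argument using $b_i\neq0$; uniqueness follows since the homogeneous problem has only the trivial solution in the mass-zero class.

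\emph{Reduction to a resolvent estimate.} Writing $u=p-\Pi p$, the dynamics become $\p_t u=\mathcal{L}u$ on the mass-zero subspace $H_0=\{u:\int_0^1(u_1+u_2)\,dx=0\}$ of $(L^2[0,1])^2$ equipped with the $p_\infty$-weighted inner product \eqref{def-inner}. Setting $h_i=u_i/p_{i,\infty}$, a relative-entropy computation gives $\re\langle\mathcal{L}u,u\rangle=-D[u]\le0$, where $D[u]$ is the nonnegative reaction dissipation controlling $\|h_1-h_2\|$ and the transport contributions cancel by the flux boundary condition; hence $\mathcal{L}$ generates a contraction semigroup. The generalized Gearhart--Pr\"uss theorem then yields $\|e^{t\mathcal{L}}\|\le e^{\pi/2}e^{-\alpha t}$ with $\alpha=\Psi(\mathcal{L}):=\inf\{\|(\mathcal{L}-i\lambda)u\|:u\in D(\mathcal{L})\cap H_0,\ \|u\|=1,\ \lambda\in\mathbb{R}\}$, so the whole problem reduces to proving the uniform lower bound $\Psi(\mathcal{L})>0$, which also produces the constant $C=e^{\pi/2}$ in \eqref{main-result}.

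\emph{The resolvent estimate.} I would argue by contradiction: if $\Psi(\mathcal{L})=0$ there are $u_n\in D(\mathcal{L})\cap H_0$ with $\|u_n\|=1$ and $\lambda_n\in\mathbb{R}$ such that $f_n:=(\mathcal{L}-i\lambda_n)u_n\to0$. Taking the real part of $\langle f_n,u_n\rangle$ forces $D[u_n]\to0$, i.e.\ $h_{1,n}-h_{2,n}\to0$. When $\lambda_n$ stays bounded, a compactness argument (the dissipation together with the equation supplying the needed strong precompactness) extracts a limit $u_*$ with $\|u_*\|=1$, $u_*\in H_0$, and $(\mathcal{L}-i\lambda_*)u_*=0$; since $D[u_*]=0$ yields a common profile $h_1=h_2=h$, the sum and difference transport equations become two first-order relations for $h$ whose consistency requires the cross term $p_{1,\infty}p_{2,\infty}\,(b_1-b_2)$ to vanish. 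By Assumption \ref{as:b} this fails unless $\lambda_*=0$ and $h$ is constant, which puts $u_*\in\mathrm{span}(p_\infty)$ and contradicts $u_*\in H_0$, $\|u_*\|=1$.

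\emph{Main obstacle.} The genuine difficulty is the high-frequency regime $\lambda_n\to\infty$, where compactness is unavailable and one cannot pass to a limit. Here I would replace the limiting argument by a direct quantitative estimate: using the non-degeneracy $b_i\neq0$, the transport operators $\p_x(b_i\,\cdot)$ with the flux boundary condition are invertible, and for large $|\lambda|$ the operator $-i\lambda+\p_x(b_i\,\cdot)$ dominates the bounded reaction term; combined with the dissipation control on $h_1-h_2$, this reduces the candidate profile once more to the two incompatible transport relations above, whose leading-order obstruction is again the factor $p_{1,\infty}p_{2,\infty}(b_1-b_2)$ excluded by Assumption \ref{as:b}. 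Making this incompatibility quantitative and uniform in $\lambda$, so that $\|(\mathcal{L}-i\lambda)u\|$ stays bounded below for all large $|\lambda|$, is the crux of the argument, and it is precisely the step where the non-separability of the velocity fields, rather than merely their non-degeneracy, is indispensable.
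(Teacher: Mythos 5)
Your architecture matches the paper's (stationary flux ODE for part 1; the weighted space \eqref{def-inner} with the relative entropy identity giving m-dissipativity; Wei's theorem reducing everything to $\Psi>0$ on the mass-zero subspace; a contradiction argument whose bounded-frequency case runs on compactness exactly as you describe), but there is a genuine gap at the step you yourself flag as the crux: the high-frequency regime $\lambda_n\to\infty$ is left unproven, and the sketch you offer for it would fail. The premise that ``for large $|\lambda|$ the operator $-i\lambda+\p_x(b_i\,\cdot)$ dominates the bounded reaction term'' is false: with the conservative flux boundary condition, the transport operator $-\p_x(b_i\,\cdot)$ has purely imaginary eigenvalues accumulating at infinity, with eigenfunctions proportional to $b_i^{-1}\exp\left(-i\lambda\int_0^x b_i^{-1}dw\right)$ whenever $\lambda\int_0^1 b_i^{-1}dw\in 2\pi\mathbb{Z}$. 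Hence along suitable sequences $\lambda_n\to\infty$ the operators $-i\lambda_n-\p_x(b_i\,\cdot)$ are not invertible at all, let alone uniformly, so no perturbation-of-a-dominant-operator argument can absorb the reaction term. Moreover, the obstruction at high frequency is not the pointwise algebraic factor $p_{1,\infty}p_{2,\infty}(b_1-b_2)$ you carry over from the bounded case; it is of an entirely different, oscillatory-integral nature.

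The missing idea is the following mechanism, which is what the paper actually uses. The entropy dissipation forces $h_{1,n}-h_{2,n}\to0$ in $L^2$; substituting this into each component of $(A-i\lambda_n)p_n=f_n\to0$ and multiplying by the integrating factor gives $\p_x\left(\exp\left(i\lambda_n\int_0^x b_i^{-1}dw\right)h_{i,n}\right)\to0$, so by compact embedding each $h_{i,n}$ is asymptotically $c_i\exp\left(-i\lambda_n\int_0^x b_i^{-1}dw\right)$ with $|c_1|+|c_2|\neq0$. The constraint $h_{1,n}\approx h_{2,n}$ then forces $|c_1|=|c_2|\neq0$ together with
\begin{equation*}
\left|\int_0^1 \exp\left(i\lambda_n\int_0^x\bigl(b_1^{-1}-b_2^{-1}\bigr)dw\right)dx\right|\longrightarrow 1,
\end{equation*}
i.e.\ the relative phase must be asymptotically non-oscillatory. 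Since $b_1^{-1}-b_2^{-1}$ is continuous and nonzero on some subinterval (by Assumption \ref{as:b} and $b_i\neq0$), a change of variables plus the Riemann--Lebesgue lemma caps the limsup of this modulus strictly below $1$, giving the contradiction. Without this (or an equivalent quantitative substitute), your proof of part 2 is incomplete precisely where the difficulty lies. A side remark: your closing claim that non-separability is ``indispensable'' misreads the hypothesis — only distinguishability $b_1(x^*)\neq b_2(x^*)$ at a single point is needed, and the separable Goldstein--Taylor case $b_1\equiv1$, $b_2\equiv-1$ is covered by the same argument.
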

\begin{remark}
Regularity condition $b_1,b_2\in C^1[0,1]$ in Assumption \ref{as:b} can be relaxed to $b_1,b_2\in C[0,1]$. The mild solutions are defined via the $C^0$ semigroup generated by the operator $A$ given in \eqref{def-A}.
\end{remark}


For heuristic purposes, we layout the proof strategy as follows. First, with characterization of the steady states $p_{\infty}=(p_{1,\infty},p_{2,\infty})$, we can carry out the standard relative entropy estimate (\cite{MICHEL20051235,perthame2006transport}), given by 
\begin{equation}\label{real-relative-entropy}
    \frac{d}{dt}\left[\sum_{i=1,2}\int_0^1(h_i(t,x)-1)^2p_{i,\infty}dx\right]=-\int_0^{1}(p_{1,\infty}+p_{2,\infty})|h_1-h_2|^2dx\leq 0,
\end{equation} where $ h_i(t,x):=p_i(t,x)/p_{i,\infty}(x), i=1,2$. Such relative entropy estimate \eqref{real-relative-entropy} gives crucial information on the dynamics of \eqref{eq:system}. In fact, from \eqref{eq:system} one can derive convergence to the steady state in a weak sense by adapting a classical compactness argument (see \cite{MICHEL20051235}, and also Chapter 3.6 of \cite{perthame2006transport}). However, an exponential convergence does not follow from \eqref{real-relative-entropy} due to the lack of coercivity in the dissipation term: it only involves information in the $i$ direction but the dissipation in the $x$ direction is not involved.


Second, to prove the exponential convergence in spite of the degeneracy in the dissipation, we leverage the generalization of the Gearhart-Pr\"uss theorem \cite{helffer2010resolvent,helffer2013spectral} recently shown by Wei \cite{wei2021diffusion}, 
\begin{theorem}[Wei \cite{wei2021diffusion}]\label{thm:wei}
Let $A$ be an m-dissipative operator on a Hilbert space $X$, then the corresponding semigroup $e^{tA}$ satisfies the following bound, 
\begin{equation}\label{semigroup}
    \|e^{tA}\|\leq e^{-t\Psi(A)+\pi/2},
\end{equation} where $\Psi(A)$ is defined as
\begin{equation}\label{resolvent}
    \Psi(A):=\inf \{\|(A-i\lambda)p\|:p\in D(A),\|p\|=1,\lambda\in \mathbb{R}\}.
\end{equation}
\end{theorem}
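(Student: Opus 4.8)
The plan is to prove Theorem~\ref{thm:wei} as a quantitative Gearhart--Pr\"uss estimate, using only that $A$ is m-dissipative and the lower bound encoded by $\Psi(A)$. First I would record the two structural facts. By the Lumer--Phillips theorem, an m-dissipative $A$ generates a $C_0$ contraction semigroup, so for $u_0\in D(A)$ the orbit $u(t):=e^{tA}u_0$ is differentiable with $u'=Au$ and
\begin{equation}
\frac{d}{dt}\|u(t)\|^2=2\re\langle Au(t),u(t)\rangle\le 0,
\end{equation}
i.e. $t\mapsto\|u(t)\|$ is nonincreasing; this monotonicity is the quantity that will ultimately be promoted to exponential decay. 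Second, the definition of $\Psi(A)$ gives the uniform lower bound $\|(A-i\lambda)p\|\ge\Psi(A)\|p\|$ for all $p\in D(A)$ and $\lambda\in\mathbb{R}$. I would then check that the whole imaginary axis lies in $\rho(A)$: injectivity and closedness of the range follow from this lower bound, while surjectivity is obtained by a standard continuity/closed-range argument starting from $\re\mu>0$ (where m-dissipativity already guarantees invertibility) and letting $\re\mu\downarrow0$. Consequently $\|(A-i\lambda)^{-1}\|\le \Psi(A)^{-1}$ uniformly in $\lambda\in\mathbb{R}$.

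The core is to convert this uniform resolvent bound on the imaginary axis into a decay rate for the semigroup. I would localize the orbit in time: for a smooth cutoff $\chi$ supported in an interval $(0,L)$ and vanishing at both endpoints, the function $g(t):=\chi(t)u(t)$ extends by zero to a compactly supported function on $\mathbb{R}$ and satisfies
\begin{equation}
g'(t)-Ag(t)=\chi'(t)u(t).
\end{equation}
Taking the Fourier transform in $t$ turns this into $(i\lambda-A)\widehat{g}(\lambda)=\widehat{\chi'u}(\lambda)$, so the resolvent bound gives $\Psi(A)\,\|\widehat{g}(\lambda)\|\le\|\widehat{\chi'u}(\lambda)\|$ for every $\lambda$; integrating in $\lambda$ and applying Plancherel yields the clean $L^2$ inequality
\begin{equation}
\Psi(A)\,\|\chi u\|_{L^2(\mathbb{R};X)}\le\|\chi' u\|_{L^2(\mathbb{R};X)}.
\end{equation}
This already shows that $\Psi(A)$ controls the ratio between the orbit and the derivative of the cutoff, which is the mechanism behind the decay.

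Finally I would extract the sharp pointwise bound with constant $e^{\pi/2}$. The idea is to feed an exponentially weighted, optimally shaped cutoff into the $L^2$ inequality and then invoke the monotonicity of $\|u(t)\|$ to replace integral averages by endpoint values. Writing $\chi(t)=e^{\Psi(A)t}\psi(t)$ reduces matters to a one-dimensional variational inequality for the profile $\psi$ on an interval, whose extremal is a trigonometric (sine/cosine) function; the associated Poincar\'e-type constant for that extremal is exactly where the factor $\pi/2$ in the exponent is produced, rather than merely some unspecified constant. A routine density argument then passes the estimate from $u_0\in D(A)$ to arbitrary $u_0\in X$, giving \eqref{semigroup}. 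I expect the main obstacle to be precisely this last step: turning the $L^2$-in-time inequality into a pointwise-in-$t$ bound with the \emph{sharp} constant $e^{\pi/2}$. Obtaining an exponential rate with some constant is comparatively soft, but identifying the optimal test function and using the orbit's norm-monotonicity to pin down the exact $\pi/2$ is the delicate part; the invertibility of $A-i\lambda$ on the full imaginary axis is a secondary technical point that must also be handled with care, since $\Psi(A)$ a priori only furnishes a lower bound.
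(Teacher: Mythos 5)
A preliminary but important point: the paper does not prove this statement at all. Theorem \ref{thm:wei} is imported verbatim from Wei \cite{wei2021diffusion} and used as a black box (the paper's own contribution begins where this theorem ends), so your attempt can only be judged against Wei's published argument, not against anything internal to this paper. Judged that way, your architecture is the standard and correct one: Lumer--Phillips gives the contraction semigroup and the monotonicity of $t\mapsto\|u(t)\|$; the bound $\|(A-i\lambda)p\|\ge\Psi(A)\|p\|$ together with a continuation argument from $\{\mathrm{Re}\,\mu>0\}$ places the imaginary axis in the resolvent set (you should add the one-line remark that when $\Psi(A)=0$ the bound is trivial by contractivity, so one may assume $\Psi(A)>0$); and the localization--Fourier--Plancherel step, giving $\Psi(A)\,\|\chi u\|_{L^2(\mathbb{R};X)}\le\|\chi' u\|_{L^2(\mathbb{R};X)}$ for Lipschitz cutoffs vanishing at $t=0$, is exactly the mechanism by which a resolvent estimate on $i\mathbb{R}$ can enter a time-dependent bound.

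The genuine gap is your final step, and it is not a technicality: it is where the entire content of the theorem (rate exactly $\Psi(A)$, constant exactly $e^{\pi/2}$) lives. Your stated plan --- feed an exponentially weighted trigonometric cutoff into the $L^2$ inequality and then ``invoke the monotonicity of $\|u(t)\|$ to replace integral averages by endpoint values'' --- demonstrably does not produce the result. Writing $m(t):=\|u(t)\|^2$, the choice $\chi(t)=\sin(\Psi t)$ on $[0,\pi/\Psi]$ reduces the inequality to $\int_0^{\pi/\Psi}\cos(2\Psi t)\,m\,dt\ge0$, and replacing $m$ by its endpoint values on the sign-definite pieces yields only $m(T)\le m(0)$; the weighted choice $\chi=e^{\psi t}\sin(\epsilon t)$ with $\psi^2+\epsilon^2=\Psi^2$ makes $(\chi')^2-\Psi^2\chi^2$ an exact derivative of a function vanishing at both ends of $[0,\pi/\epsilon]$, and the same crude endpoint estimates again collapse to a triviality. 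Nor can you settle for ``exponential decay with some constant'' and iterate via the semigroup property: iteration always degrades the rate strictly below $\Psi(A)$. What is actually needed --- and what Wei's argument supplies --- is to keep the full information of the unknown weight $m$: read $\int((\chi')^2-\Psi^2\chi^2)\,m\,dt\ge0$ (valid for all admissible $\chi$) as disconjugacy of the weighted Sturm--Liouville equation $(m\phi')'+\Psi^2 m\phi=0$ with $\phi(0)=0$, and run a phase-plane analysis: the Pr\"ufer phase $\theta=\arctan(\Psi\phi/\phi')$ satisfies $\theta'=\Psi+\frac{m'}{2m}\sin(2\theta)$, disconjugacy plus $m'\le0$ forces $\theta(t)<\pi/2$ for all $t$ (once $\theta\ge\pi/2$ one has $\theta'\ge\Psi$, so $\theta$ would reach $\pi$, i.e.\ a conjugate point, in finite time), and integrating $\theta'\ge\Psi+\frac{m'}{2m}$ against $\theta<\pi/2$ gives $\tfrac12\log(m(t)/m(0))<\pi/2-\Psi t$, which is precisely $\|e^{tA}\|\le e^{-t\Psi(A)+\pi/2}$. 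Your sentence about ``the Poincar\'e-type constant of a trigonometric extremal'' gestures at this, but since the extremal profile depends on the unknown weight $m$ itself, the step you describe as a variational computation with an explicit sine is exactly the step that requires the new idea; as written, the proposal stops short of the theorem.
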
 
Theorem \ref{thm:wei} gives the semigroup bound \eqref{semigroup} via the resolvent estimate \eqref{resolvent}, and has been used in the analysis of hydrodynamic stability (e.g. \cite{chen2020transition}).
We defer giving the definition of an m-dissipative operator \cite{pazy2012semigroups} to Section \ref{sec:2}. We shall show that, the relative entropy estimate \eqref{real-relative-entropy} naturally implies a semigroup formulation of \eqref{eq:system}, where the generator is indeed m-dissipative. Hence, proving the exponential convergence boils down to showing that the corresponding $\Psi(A)>0$, which can be achieved by a compactness argument.  




To sum up, we prove the exponential convergence of the two-speed model with variant drift fields \eqref{eq:system}, by combining the relative entropy structure with a generalized Gearhart-Pr\"uss theorem, to obtain exponentially contracting property on the corresponding time-evolution semigroup. Such strategy has potential to be used in understanding the long time behavior of other (kinetic) models, in particular those with ``non-separable'' advection fields, such as \eqref{eq:neuron} and \eqref{eq:fluid}.

 Throughout this paper, we take Assumption \ref{as:b} unless otherwise specified. The rest of this paper is arranged as follows: With preparations in Section 2: steady state, and the semigroup formulation, we prove Theorem 1 in Section 3. Finally a discussion is given in Section 4. 




\section{Preliminary: steady state and generator of the semigroup} \label{sec:2}

In this section, we aim to construct a proper semigroup formulation for \eqref{eq:system} such that the time evolution semigroup can be shown to be exponentially contacting. However, to this end, we need to first show some preliminary properties of the steady state.


First, we prove the existence and uniqueness of a positive steady state.
\begin{proposition}\label{prop:steady}
There exists a steady state of \eqref{eq:system}, which is unique up to a constant multiplier. Moreover, we can choose a normalization constant such that the steady state $p_{\infty}=(p_{1,\infty},p_{2,\infty})$ is positive and satisfies \begin{equation}\label{steady-intergal}
    \int_0^{1}p_{1,\infty}(x)dx=\int_0^{1}p_{2,\infty}(x)dx=\frac{1}{2}.
\end{equation}
Furthermore, the following $L^{\infty}$ estimate holds
\begin{equation}\label{Linfinity}
    0<c\leq p_{i,\infty}(x)\leq C<+\infty,\quad i=1,2,\ x\in[0,1].
\end{equation}
\end{proposition}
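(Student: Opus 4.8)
The plan is to recast the stationary problem in terms of the fluxes $q_i := b_i p_i$, which turns the steady-state version of \eqref{eq:system} into a linear first-order ODE system with periodic data. Setting $\p_t=0$ and rewriting the reaction through the fluxes, the two equations become $q_1' = -q_1/b_1 + q_2/b_2$ and $q_2' = q_1/b_1 - q_2/b_2$, while the boundary conditions read $q_i(0)=q_i(1)$. Adding the equations gives $(q_1+q_2)'=0$, so the total flux $J:=q_1+q_2$ is constant; substituting $q_2=J-q_1$ leaves the single scalar ODE $q_1' + a(x)q_1 = J/b_2$ with $a:=1/b_1+1/b_2$. Since each $b_i$ is continuous and nonvanishing on the connected interval $[0,1]$, it has a constant sign; this will be used repeatedly.

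First I would settle existence and uniqueness up to a scalar. With the integrating factor $e^{A}$, $A(x):=\int_0^x a$, the general solution is fixed by the pair $(q_1(0),J)$, and the single periodicity constraint $q_1(0)=q_1(1)$ reduces, after one integration, to $q_1(0)\bigl(1-e^{-A(1)}\bigr) = J\,e^{-A(1)}\int_0^1 e^{A(s)}/b_2(s)\,ds$. The crucial point is that the integral on the right is nonzero, because $e^{A}/b_2$ has constant sign (here the nonvanishing of $b_2$ enters). Hence the solution set is always exactly one-dimensional: when $A(1)\neq 0$ it is parametrized by $J$ with $q_1(0)$ determined, and when $A(1)=0$ the constraint forces $J=0$ and leaves $q_1(0)$ free. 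This produces a steady state, unique up to a constant multiplier.

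The main obstacle is positivity, i.e. arranging $\sign(q_i)=\sign(b_i)$ on all of $[0,1]$ simultaneously with a single scalar degree of freedom, uniformly in the sign pattern of $(b_1,b_2)$. The key device I would use is that $f:=q_1 e^{A}$ satisfies $f' = J e^{A}/b_2$, which has constant sign, so $f$ is monotone; combined with the periodicity relation $f(1)=f(0)e^{A(1)}$ (equal-sign endpoints), this forces $f$, and hence $q_1$, to be sign-definite on $[0,1]$, since it cannot cross zero. The same argument applied to $q_2$ shows it too is sign-definite. It then remains to check that the sign of $J$ can be chosen so that $q_1$ and $q_2$ carry the signs of $b_1$ and $b_2$ respectively; I would verify this by reading off $\sign(q_1(0))$ and $\sign(q_2(0))$ from the explicit formulas, finding that both positivity requirements reduce to the same condition on $\sign(J)$. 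The degenerate case $A(1)=0$, which forces $b_1,b_2$ to have opposite signs and $J=0$, is handled directly, since then $q_2=-q_1$.

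Finally, the remaining conclusions are routine. As $q_i$ is continuous and $b_i$ is continuous and bounded away from zero on the compact interval $[0,1]$, the density $p_i=q_i/b_i$ is continuous and, being strictly sign-definite of the correct sign, strictly positive; a continuous positive function on $[0,1]$ attains positive maximum and minimum, giving the two-sided bound \eqref{Linfinity}. For the normalization, integrating the first stationary equation over $[0,1]$ and using $q_1(0)=q_1(1)$ gives $\int_0^1 p_1 = \int_0^1 p_2$; scaling the one-parameter family so that the conserved total mass equals $1$ then yields \eqref{steady-intergal}.
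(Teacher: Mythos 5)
Your proof is correct, and it rests on the same reduction as the paper's: pass to the fluxes $q_i=b_ip_i$, use that $q_1+q_2$ is constant to collapse the system into one scalar ODE, and obtain sign-definiteness of each flux from the monotonicity of $e^{A}q_1$ combined with the periodic boundary condition. You diverge in two sub-arguments. For existence and uniqueness, the paper works with the fundamental matrix $\Phi(x)$ of the flux system: since $(1,1)$ is a left eigenvector of $\Phi(x)$ with eigenvalue $1$ (mass conservation), $\Phi(1)-I$ is automatically singular, which gives existence with no case distinction on $A(1)$ or on the signs of the $b_i$; uniqueness follows because $\Phi(1)=I$ would make every solution periodic, contradicted by the data $(q_1,q_2)(0)=(0,1)$ via the same nonvanishing integral $\int_0^1 e^{A}/b_2\,dx\neq0$ that you exploit. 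Second, to show the two components carry the correct (matching) signs, the paper integrates the first steady equation over $[0,1]$ to get $\int_0^1p_1\,dx=\int_0^1p_2\,dx$, which forces $p_1$ and $p_2$ to share a sign without inspecting any formulas; you instead read $\sign(q_1(0))$ and $\sign(q_2(0))$ off the explicit solution, which requires running through the sign patterns of $(b_1,b_2)$ and the degenerate case $A(1)=0$. That bookkeeping, which you defer, does close: in every pattern both positivity requirements reduce to a single condition on $\sign(J)$ (or, when $A(1)=0$ and hence $J=0$, to the free sign of $q_1(0)$), so there is no gap. The net comparison: your explicit integration unifies existence, uniqueness, and strict positivity in one computation, while the paper's eigenvector and integral-identity arguments trade explicitness for the absence of any case analysis.
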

\begin{proof}

The steady state equation is given by \begin{equation}\label{steady-two-speed}
\begin{cases}
\p_x(b_1p_1)=-p_1+p_2,\quad x\in(0,1),\\
\p_x(b_2p_2)=p_1-p_2,\quad x\in(0,1),\\
b_i(0)p_i(0)=b_i(1)p_i(1),\quad i=1,2.
\end{cases}
\end{equation}
It is convenient to introduce the fluxes $J_i=b_ip_i$, $i=1,2$ and thus  
$$\begin{pmatrix}
p_1\\p_2
\end{pmatrix}=\begin{pmatrix}
b_1^{-1}& \\ &b_2^{-1}
\end{pmatrix}\begin{pmatrix}
J_1\\J_2
\end{pmatrix}.$$ 
We can equivalently rewrite the system \eqref{steady-two-speed} as
\begin{equation}\label{steady-ode}
\begin{aligned}
\frac{d}{dx}\begin{pmatrix}
J_1\\J_2
\end{pmatrix}=\begin{pmatrix}
-1& 1\\1&-1
\end{pmatrix}&\begin{pmatrix}
b_1^{-1}& \\ &b_2^{-1}
\end{pmatrix}\begin{pmatrix}
J_1\\J_2
\end{pmatrix},\quad x\in(0,1),\\J:=(J_1,J_2)^{T},&\quad J(0)=J(1).
\end{aligned}
\end{equation}
And the properties of the steady states can be obtained by analyzing the ODE system \eqref{steady-ode}.

Let $\Phi(v)$ be the fundamental matrix of the linear system, i.e.,
\begin{equation}\label{fundamental-v}
\Phi(0)=I,\quad\frac{d}{dx}\Phi(x)=\begin{pmatrix}
-1& 1\\1&-1
\end{pmatrix}\begin{pmatrix}
b_1^{-1}& \\ &b_2^{-1}\end{pmatrix}\Phi(x).\end{equation}
Then to find a steady state is equivalent to find a right-eigenvector of $\Phi(1)$ with eigenvalue $1$. 
The existence follows by observing that $(1,1)$ is a left-eigenvector of $\Phi(x)$ for all $x\in[0,1]$  with eigenvalue $1$, which can be checked directly by computing $\frac{d}{dx}\left((1,1)\Phi(x)\right)$.

For positivity, we note that $J_1(x)+J_2(x)$ is a constant for all $x\in[0,1]$.  Therefore the ODE system \eqref{steady-ode} can be decoupled into
\begin{equation}\label{steady-tmp-pf}
    \frac{d}{dx}J_1(x)=-(b_1^{-1}+b_2^{-1})J_1+b_2^{-1}(J_1(0)+J_2(0)),
\end{equation}with a similar equation for $J_2$. Note that $$\frac{d}{dx}(e^{\int_0^{x}b_1^{-1}+b_2^{-1}dx'}J_1)=e^{\int_0^{x}b_1^{-1}+b_2^{-1}dx'}b_2^{-1}(J_1(0)+J_2(0))$$ and $b_2^{-1}(J_1(0)+J_2(0))$ does not change sign for $x\in[0,1]$, we get that $J_1(x)$ does not change sign from $J_1(0)=J_1(1)$. Similarly $J_2(x)$ does not change sign. Then we deduce that the corresponding $p_1,\,p_2$ does not change sign either. Finally, by integrating the first equation of \eqref{steady-ode} on $[0,1]$, we get $$-\int_0^{1}p_1dx+\int_0^{1}p_2dx=\int_0^{1}\frac{d}{dx}J dx=J_1(1)-J_1(0)=0,$$ and we conclude that $p_1,\,p_2$ are of the same sign for $x\in[0,1]$.

Therefore we can get a non-negative steady state $p_1,p_2$, by multiplying with a proper constant. To show $p_1,p_2$ are indeed positive, we follow a contradiction argument to show $J_1(x),J_2(x)\neq 0$. WLOG suppose $J_1(\bar{x})=0$ for some $\bar{x}\in[0,1]$. If $\bar{x}\in (0,1)$, then $\frac{d}{dx}J_1(\bar{x})$ is also zero since $J_1$ does not change sign. Hence, by the ODE \eqref{steady-ode} we deduce $J_2(\bar{x})=0=J_1(\bar{x})$. Since $(J_1,J_2)$ satisfies the linear ODE \eqref{steady-ode}, it follows that $J_1,J_2\equiv0$, which is a contradiction. If $\bar{x}=0$ or $1$, thanks to the periodic boundary condition for $J$ \eqref{steady-ode} we can derive a contradiction similarly.

Integrating the first equation in \eqref{steady-two-speed} we get
\begin{equation*}
        \int_0^{1}p_{1}(x)dx=\int_0^{1}p_{2}(x)dx>0,
\end{equation*} thanks to the boundary condition. By multiplying a normalization constant we can get a positive steady state $(p_{1,\infty},p_{2,\infty})$ satisfying \eqref{steady-intergal}.

For uniqueness, it is equivalent to show that the kernel of $\Phi(1)-I$ is exactly one-dimensional, where $I$ denotes the identity matrix. Actually, otherwise we would have $\Phi(1)=I$, which means every solution of the ODE in \eqref{steady-ode} will satisfy the condition $J(0)=J(1)$ automatically. By considering initial data $J_1(0)=0,J_2(0)=1$, we get $J_1(1)\neq 0=J_1(0)$ from \eqref{steady-tmp-pf} which is a contradiction.

\end{proof}

 With the steady state $p_{\infty}=(p_{1,\infty},p_{2,\infty})$ given in Proposition \ref{prop:steady}, we define the following complex Hilbert space,
\begin{equation}
    X:=L^2([0,1],p_{1,\infty}^{-1}dx)\times L^2([0,1],p_{2,\infty}^{-1}dx),
\end{equation}whose inner product and norm are given by 
\begin{equation}\label{def-inner}
    (p,q):=\sum_{i=1,2}\int_{0}^{1}p_i\bar{q}_ip_{i,\infty}^{-1}dx,\quad \|p\|:=\left(\sum_{i=1,2}\int_{0}^{1}|p_i|^2p_{i,\infty}^{-1}dx\right)^{1/2}.
\end{equation}Since $p_{i,\infty}^{-1}$ is continuous and positive by Proposition \ref{prop:steady}, the norm in $X$ is equivalent to the usual $L^2$ norm.

We define the unbounded operator $A:D(A)\rightarrow X$
\begin{equation}\label{def-A}
\begin{aligned}
    (Ap)_i&:=-\p_x(b_ip_i)-p_i+p_{i+1},\quad i=1,2,\\
    D(A)&:=\{p=(p_1,p_2)\in X,\, (b_ip_i)\in H^1(0,1), \,b_i(0)p_i(0)=b_i(1)p_i(1),\, i=1,2\},
\end{aligned}
\end{equation} where we use the mod-2 convention for the subscripts: $p_3=p_1$. We shall interpret the solution of \eqref{eq:system} as defined by the semigroup generated by $A$, i.e., mild solution \cite{pazy2012semigroups}.

Next, we translate the relative entropy estimate \eqref{real-relative-entropy} to this semigroup formulation, which shows that $A$ is an m-dissipative operator, which implies that $A$ generates a $C^0$ semigroup of contractions. We recall an operator $\mathcal L$ on a Hilbert space $X$ is called a dissipative operator if $\re(\mathcal Lp,p)\leq 0,$ for all $p\in D(\mathcal L)$. An m-dissipative operator $\mathcal L$ is a dissipative operator with the range $R(\lambda \mathcal I-\mathcal L)=X$ for all $\lambda>0$ \cite{pazy2012semigroups}.

\begin{proposition}\label{prop:dis}
The operator $A$ defined in \eqref{def-A}  is an m-dissipative operator in $X$, with
\begin{equation}\label{relative-entropy}
    \re(Ap,p)=-\frac{1}{2}\int_0^{1}(p_{1,\infty}+p_{2,\infty})|h_1-h_2|^2dx\leq 0,\quad h_i:=p_i/p_{i,\infty},\quad i=1,2.
\end{equation}
\end{proposition}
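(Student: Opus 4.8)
The plan is to establish the two assertions in turn: the dissipativity identity \eqref{relative-entropy}, and then the range condition $R(\lambda\mathcal I-A)=X$ for every $\lambda>0$.

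For the identity I would fix $p\in D(A)$, set $h_i=p_i/p_{i,\infty}$ and introduce the steady fluxes $J_{i,\infty}:=b_ip_{i,\infty}$, and expand $(Ap,p)=\sum_{i=1,2}\int_0^1[-\p_x(b_ip_i)-p_i+p_{i+1}]\bar p_ip_{i,\infty}^{-1}\,dx$. Since $\bar p_ip_{i,\infty}^{-1}=\bar h_i$ and $b_ip_i=h_iJ_{i,\infty}$, the transport term becomes $-\int_0^1[(\p_xh_i)J_{i,\infty}+h_i\,\p_xJ_{i,\infty}]\bar h_i\,dx$, where by Proposition \ref{prop:steady} the steady state obeys $\p_xJ_{i,\infty}=-p_{i,\infty}+p_{i+1,\infty}$ (mod-$2$). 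Taking real parts and using $\re[(\p_xh_i)\bar h_i]=\tfrac12\p_x|h_i|^2$, I integrate the leading piece by parts. Writing $J_i:=b_ip_i$, the boundary contribution is $\tfrac12\bigl[\,|h_i|^2J_{i,\infty}\,\bigr]_0^1=\tfrac12\bigl[\,|J_i|^2/J_{i,\infty}\,\bigr]_0^1$, which vanishes because both $J_i$ and $J_{i,\infty}$ are periodic: $J_i(0)=J_i(1)$ from $p\in D(A)$, and $J_{i,\infty}(0)=J_{i,\infty}(1)$ for the steady flux. After this step the $\p_xJ_{i,\infty}$ contributions from the transport part combine with the reaction terms; collecting everything and summing over $i=1,2$, I expect the coefficients of $p_{1,\infty}$ and of $p_{2,\infty}$ to reduce symmetrically to $-\tfrac12\bigl(|h_1|^2+|h_2|^2-2\re(h_1\bar h_2)\bigr)=-\tfrac12|h_1-h_2|^2$, which gives \eqref{relative-entropy}; in particular $\re(Ap,p)\le0$, so $A$ is dissipative.

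For m-dissipativity it remains to solve $(\lambda\mathcal I-A)p=f$ for every $\lambda>0$ and $f=(f_1,f_2)\in X$. Passing to the flux unknowns $J_i:=b_ip_i$ (so $p_i=J_i/b_i$), the resolvent equation becomes the first-order linear system $\tfrac{d}{dx}J=M_\lambda(x)J+F$ on $(0,1)$ with the periodic condition $J(0)=J(1)$, where $F=(f_1,f_2)^T$ and $M_\lambda(x)=\left(\begin{smallmatrix}-(\lambda+1)/b_1 & 1/b_2\\ 1/b_1 & -(\lambda+1)/b_2\end{smallmatrix}\right)$ is continuous and bounded, since $b_1,b_2$ are continuous and bounded away from zero. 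Letting $\Phi_\lambda$ be the fundamental matrix with $\Phi_\lambda(0)=I$, variation of parameters reduces the periodic problem to solving $(I-\Phi_\lambda(1))J(0)=\Phi_\lambda(1)\int_0^1\Phi_\lambda(s)^{-1}F(s)\,ds$ for $J(0)$, which is uniquely solvable precisely when $I-\Phi_\lambda(1)$ is invertible, i.e. when $1$ is not an eigenvalue of $\Phi_\lambda(1)$.

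The invertibility of $I-\Phi_\lambda(1)$ then follows from the dissipativity just proved: if $1$ were an eigenvalue, the homogeneous periodic system would admit a nontrivial $J$, corresponding to a nonzero $p\in D(A)$ with $Ap=\lambda p$, whence $\re(Ap,p)=\lambda\|p\|^2>0$, contradicting \eqref{relative-entropy}. Hence $J(0)$ is uniquely determined, the resulting $J$ lies in $H^1(0,1)$ (as $\p_xJ=M_\lambda J+F\in L^2$), so $p_i=J_i/b_i$ satisfies $b_ip_i\in H^1$ together with $J_i(0)=J_i(1)$, that is $p\in D(A)$ and $(\lambda\mathcal I-A)p=f$. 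This yields $R(\lambda\mathcal I-A)=X$ for all $\lambda>0$, so $A$ is m-dissipative. I expect the main technical care to lie in the first step—justifying the integration by parts for $p\in D(A)$ (so that $h_i=J_i/J_{i,\infty}\in H^1$, using $J_i\in H^1$ and $J_{i,\infty}$ continuous and bounded below) and tracking the precise cancellation of the boundary and reaction terms—rather than in the ODE solvability, which becomes routine once dissipativity supplies injectivity.
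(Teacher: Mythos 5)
Your proof is correct, and it splits against the paper as follows. The identity \eqref{relative-entropy} is derived essentially as in the paper: there, too, the steady-state equations are substituted into $(Ap)_i$ (giving $(Ap)_i=-b_ip_{i,\infty}\p_xh_i-h_ip_{i+1,\infty}+h_{i+1}p_{i+1,\infty}$), the term $\re\int_0^1(-b_ip_{i,\infty}\p_xh_i\,\bar h_i)\,dx$ is integrated by parts using periodicity of $h_i$ and of the steady flux $b_ip_{i,\infty}$, and the terms recombine into $-\tfrac12\int_0^1(p_{1,\infty}+p_{2,\infty})|h_1-h_2|^2dx$; your flux bookkeeping is the same algebra in different clothes. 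For the range condition, however, you take a genuinely different route. The paper never solves the resolvent equation: it cites the criterion (Pazy, Ch.~1.4, Cor.~4.4) that a densely defined closed operator is m-dissipative once $A$ and $A^*$ are both dissipative, and handles the adjoint by asserting $D(A^*)=D(A)$ and $\re(A^*p,p)=\re(p,Ap)\le 0$. You instead rewrite $(\lambda\mathcal I-A)p=f$ as a periodic two-point problem for the fluxes $J_i=b_ip_i$, solve it by variation of parameters, and obtain invertibility of $I-\Phi_\lambda(1)$ from injectivity, which dissipativity supplies: a nontrivial periodic homogeneous solution would yield an eigenfunction with $\re(Ap,p)=\lambda\|p\|^2>0$. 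Your route is more elementary and self-contained: it avoids computing the adjoint and the closedness/density hypotheses of the cited corollary (which the paper leaves as ``standard and straightforward''), it reuses the fundamental-matrix machinery the paper itself sets up for the steady state in Proposition \ref{prop:steady}, and it produces the resolvent explicitly, making transparent exactly where $\lambda>0$ enters. What the paper's route buys is brevity: two lines of functional analysis replace your page of ODE work. Your regularity checks ($J\in H^1(0,1)$ since $\p_xJ=M_\lambda J+F\in L^2$; $h_i=J_i/J_{i,\infty}\in H^1$ because $J_{i,\infty}$ is $C^1$ and bounded away from zero) are precisely the points that need care, and they go through.
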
We introduce the following notation:
\begin{equation}
    h_i:=p_{i}/p_{i,\infty},\quad i=1,2,
\end{equation}
Then 
\begin{equation}\label{ap-h}
    (Ap)_i=-\p_x(b_ip_{i,\infty}h_i)-h_ip_{i,\infty}+h_{i+1}p_{i+1,\infty},\quad i=1,2.
\end{equation}
And from the boundary condition of $p$ and $p_{\infty}$ we have
\begin{equation}\label{bc-h}
    h_i(0)=h_i(1),\quad i=1,2.
\end{equation}
\begin{proof}[Proof of Proposition \ref{prop:dis}]
We first check \eqref{relative-entropy} which shows that $A$ is dissipative. Substitute the steady state system \eqref{steady-two-speed} in \eqref{ap-h}, we get
\begin{equation}\label{ap-h2}
    (Ap)_i=-b_ip_{i,\infty}\p_xh_i-h_ip_{i+1,\infty}+h_{i+1}p_{i+1,\infty},\quad i=1,2.
\end{equation}
Therefore
\begin{equation*}
    (Ap,p)=\sum_{i=1,2}\int_0^{1}(-b_ip_{i,\infty}\p_xh_i\bar{h}_i-|h_i|^2p_{i+1,\infty}+h_{i+1}\bar{h}_ip_{i+1,\infty})dx.
\end{equation*}
Taking real part of the first term and integrating by parts we get
\begin{align*}
\re  \int_0^{1}(-b_ip_{i,\infty}\p_xh_i\bar{h}_i)dx&=\int_0^{1}(-b_ip_{i,\infty}\p_x(\frac{1}{2}|h_i|^2))dx=\int_0^{1}\frac{1}{2}|h_i|^2\p_x(b_ip_{i,\infty})dx\\&=\int_0^{1}\frac{1}{2}|h_i|^2(p_{i+1,\infty}-p_{i,\infty}).
\end{align*}
Therefore we continue the calculation to conclude that $A$ is dissipative 
\begin{align*}
    \re (Ap,p)&=\re \sum_{i=1,2}\int_0^{1}(\frac{1}{2}|h_i|^2(p_{i+1,\infty}-p_{i,\infty})-|h_i|^2p_{i+1,\infty}+h_{i+1}\bar{h}_ip_{i+1,\infty})dx\\&=-\frac{1}{2}\int_0^{1}(p_{1,\infty}+p_{2,\infty})|h_1-h_2|^2dx\leq 0.
\end{align*}
To show that $A$ is m-dissipative, it suffices to show that its adjoint $A^*$ is also dissipative (\cite{pazy2012semigroups} Chapter 1.4 Corollary 4.4). With the boundary condition of $p_{\infty}$, it is standard and straightforward to check that $D(A^*)=D(A)$, therefore
\begin{equation*}
    \re(A^*p,p)=\re(p,Ap)=\re(Ap,p)\leq 0,\quad \forall p\in D(A^*).
\end{equation*}
\end{proof}

Note that we can not find an $\alpha>0$ such that $\re(Ap,p)\leq -\alpha\|p\|^2$, which reflects the lack of coercivity in \eqref{eq:system}.

As a final preparation, we show that there is no eigenvalue other than $0$ has a non-negative real part.
\begin{proposition}\label{prop:eigen}
If $\lambda$ is an eigenvalue of the operator $A$ with a non-negative real part and $p$ is an associated eigenfunction. Then $\lambda=0$ and $p$ is $p_{\infty}$ multiplied by a constant. 
\end{proposition}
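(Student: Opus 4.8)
The plan is to combine the relative entropy identity from Proposition \ref{prop:dis} with the sign of $\re\lambda$ to squeeze the dissipation to zero, and then to exploit the distinguishability of the two fields in Assumption \ref{as:b} to rigidify the eigenfunction.

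First I would suppose $Ap=\lambda p$ with $\re\lambda\geq 0$ and $p\neq 0$, and pair with $p$. This gives $(Ap,p)=\lambda\|p\|^2$, so taking real parts $\re(Ap,p)=\re(\lambda)\|p\|^2\geq 0$. On the other hand, identity \eqref{relative-entropy} of Proposition \ref{prop:dis} asserts $\re(Ap,p)=-\frac12\int_0^1(p_{1,\infty}+p_{2,\infty})|h_1-h_2|^2\,dx\leq 0$. These two bounds force $\re(Ap,p)=0$. Since $\|p\|>0$, the first identity yields $\re\lambda=0$, i.e. $\lambda\in i\mathbb{R}$; the second, together with the lower bound $p_{1,\infty}+p_{2,\infty}\geq 2c>0$ from \eqref{Linfinity}, forces $h_1=h_2$ almost everywhere. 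I then write $h:=h_1=h_2$.

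Next I would feed $h_1=h_2=h$ back into the eigenvalue equation in the form \eqref{ap-h2}. The reaction contributions $-h_ip_{i+1,\infty}+h_{i+1}p_{i+1,\infty}$ cancel, leaving $(Ap)_i=-b_ip_{i,\infty}\partial_x h$, so $Ap=\lambda p$ reads $-b_ip_{i,\infty}\partial_x h=\lambda p_{i,\infty}h$ for $i=1,2$. Dividing by the strictly positive $p_{i,\infty}$ produces the two transport identities
$$-b_1\partial_x h=\lambda h,\qquad -b_2\partial_x h=\lambda h\quad\text{on }(0,1).$$
Subtracting them gives $(b_2-b_1)\partial_x h=0$. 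By Assumption \ref{as:b} and continuity of $b_1,b_2$, the set where $b_1\neq b_2$ contains a neighborhood $U$ of $x^*$, so $\partial_x h=0$ on $U$ and hence $\lambda h=0$ on $U$. If $\lambda\neq 0$, then $h\equiv 0$ on $U$; but $h$ also solves the first-order linear ODE $\partial_x h=-(\lambda/b_1)h$ on all of $(0,1)$ with continuous coefficient $\lambda/b_1$ (using $b_1\neq 0$), so by uniqueness the vanishing of $h$ on $U$ propagates to $h\equiv 0$, contradicting $p\neq 0$. Hence $\lambda=0$, and then $-b_i\partial_x h=0$ with $b_i$ nowhere zero forces $\partial_x h=0$ on $(0,1)$, so $h$ is constant and $p=h\,(p_{1,\infty},p_{2,\infty})$ is a constant multiple of $p_\infty$, as claimed.

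I expect the main obstacle to be this last step: the relative entropy estimate alone only delivers $h_1=h_2$ and $\re\lambda=0$, and the real work is converting the over-determined pair of transport equations for the single function $h$ into the rigidity statement. The crux is that Assumption \ref{as:b} is exactly what is needed — distinguishability $b_1(x^*)\neq b_2(x^*)$ to kill $\partial_x h$ near $x^*$, and non-degeneracy $b_i\neq 0$ to run the ODE uniqueness on the whole interval — so I would take care that the regularity is adequate (here $h\in H^1(0,1)$, since $b_ip_i\in H^1$ and $p_{i,\infty}\in C^1$ is bounded below by \eqref{Linfinity}) to justify treating these identities as genuine almost-everywhere ODEs.
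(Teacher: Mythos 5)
Your proof is correct and follows essentially the same route as the paper: the relative entropy identity \eqref{relative-entropy} forces $h_1=h_2$, substitution into \eqref{ap-h2} yields the over-determined transport equations $-b_i\p_x h=\lambda h$, and Assumption \ref{as:b} plus ODE uniqueness gives $h\equiv 0$ when $\lambda\neq 0$ and $h$ constant when $\lambda=0$. The only (harmless) differences are that you make $\re\lambda=0$ and the $H^1$ regularity of $h$ explicit, and you kill $\p_x h$ on a neighborhood of $x^*$ rather than just at the point $x^*$ as the paper does.
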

\begin{proof}
By the entropy estimate \eqref{relative-entropy} in Proposition \ref{prop:dis}, we deduce
\begin{equation*}
    0\leq \re (\lambda p,p)=\re (A p,p)=-\frac{1}{2}\int_0^{1}(p_{1,\infty}+p_{2,\infty})|h_1-h_2|^2dx\leq 0.
\end{equation*}
Therefore $h_1=h_2=:\bar{h}$, from \eqref{ap-h2} we obtain
\begin{equation*}
        (Ap)_i=-b_ip_{i,\infty}\p_x\bar{h},\quad i=1,2.
\end{equation*} Since $p$ is an eigenfunction, $(Ap)_i=\lambda p_i=\lambda \bar{h}p_{i,\infty}$, we get
\begin{equation}\label{eigen-tmp-pf}
-b_i\p_x\bar{h}=\lambda \bar{h},\quad i=1,2.    
\end{equation}
From Assumption \ref{as:b} $b_1\neq b_2$ at some $x^*\in[0,1]$, therefore $\bar{h}(x^*)=0$ if $\lambda\neq 0$. In this case, by a basic property of linear ODE, we conclude that $\bar{h}\equiv0$. 

If $\lambda=0$, from \eqref{eigen-tmp-pf} we deduce $\p_x\bar{h}=0$ which implies that $\bar{h}$ is a constant, i.e., $p$ is $p_{\infty}$ multiplied by a constant. 
\end{proof}
Actually, the last argument in the proof above serves as an alternative proof of the uniqueness of the steady state which is based on the entropy estimate, rather than the ODE argument in Proposition \ref{prop:steady}.

\section{Proof of the main theorem}

To obtain the exponentially contracting property of the semigroup $e^{tA}$, in the view of Theorem \ref{thm:wei}, we shall work in the orthogonal complement of $\Ker{A}=\text{span}\{p_{\infty}\}$ in $X$, denoted as $X_0$:
\begin{equation}\label{def-X0}
    X_0:=\{p\in X,\ (p,p_{\infty})=0\}.
\end{equation}
By definition of the inner product in $X$ \eqref{def-inner}, we have $(p,p_{\infty})=\sum_{i=1,2}\int_0^{1}(p_1+p_2)dx$, therefore $p\in X$ is in $X_0$ if and only if 
\begin{equation}
\int_{0}^{1}(p_1+p_2)dx=0.
\end{equation}
Clearly, $X_0$ is a Hilbert space. Note that as an operator on $X$, $A$ actually maps $D(A)$ to $X_0$, which allows us to restrict it as an unbounded operator on $X_0$. The following Lemma translates the result on $A$ in the previous section to its restriction on $X_0$.
\begin{lemma}\label{Lemma:1}
The restriction of $A$ on $X_0$: $D(A)\cap X_0\rightarrow X_0$, is m-dissipative and has no eigenvalue with a non-negative real part.
\end{lemma}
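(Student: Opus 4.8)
The plan is to derive every assertion of the lemma from the properties of $A$ on the full space $X$ established in Propositions \ref{prop:dis} and \ref{prop:eigen}, exploiting the orthogonal splitting $X = X_0 \oplus \mathrm{span}\{p_\infty\}$. Three things must be checked: that $A$ genuinely restricts to an operator $D(A)\cap X_0 \to X_0$, that this restriction is m-dissipative, and that it has no eigenvalue with non-negative real part. The first point is essentially mass conservation: for any $p\in D(A)$ one computes $(Ap)_1+(Ap)_2 = -\p_x(b_1p_1+b_2p_2)$, whose integral over $[0,1]$ vanishes by the flux boundary condition $b_i(0)p_i(0)=b_i(1)p_i(1)$; hence $\int_0^1\big((Ap)_1+(Ap)_2\big)\,dx=0$, i.e. $Ap\in X_0$ for all $p\in D(A)$, and in particular $A(D(A)\cap X_0)\subseteq X_0$. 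Dissipativity on $X_0$ is then immediate, since the inner product of $X_0$ is the restriction of that of $X$, so the bound $\re(Ap,p)\le 0$ for $p\in D(A)\cap X_0$ is inherited verbatim from \eqref{relative-entropy}.

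The substantive step is the range condition. Fix $\lambda>0$ and $f\in X_0$. By the m-dissipativity of $A$ on $X$ (Proposition \ref{prop:dis}) there is $p\in D(A)$ with $(\lambda I-A)p=f$. Decompose orthogonally $p=p_0+c\,p_\infty$ with $p_0\in X_0$ and $c\in\mathbb{C}$; since $p_\infty\in D(A)$ we have $p_0=p-c\,p_\infty\in D(A)\cap X_0$. Using $Ap_\infty=0$ one obtains $f=(\lambda I-A)p_0+c\lambda\,p_\infty$, where $(\lambda I-A)p_0\in X_0$ because $\lambda p_0\in X_0$ and $Ap_0\in X_0$ by the first step. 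This is precisely the orthogonal decomposition of $f$ along $X_0\oplus\mathrm{span}\{p_\infty\}$; as $f\in X_0$, its component in $\mathrm{span}\{p_\infty\}$ must vanish, forcing $c\lambda\,p_\infty=0$ and hence $c=0$. Thus $p=p_0\in D(A)\cap X_0$ solves $(\lambda I-A)p_0=f$, so $R(\lambda I-A|_{X_0})=X_0$ for every $\lambda>0$, and the restriction is m-dissipative.

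Finally, the eigenvalue statement follows directly from Proposition \ref{prop:eigen}. If some $\lambda$ with $\re\lambda\ge 0$ were an eigenvalue of $A|_{X_0}$ with eigenfunction $p\in D(A)\cap X_0$, $p\neq0$, then $\lambda$ is an eigenvalue of $A$ on $X$ with the same eigenfunction, so Proposition \ref{prop:eigen} gives $\lambda=0$ and $p=c\,p_\infty$. But $p\in X_0$ means $0=(p,p_\infty)=c\|p_\infty\|^2$, whence $c=0$ and $p=0$, a contradiction. I expect the range condition of the previous paragraph to be the only genuinely delicate point: one must verify that the resolvent solution on $X$ can be taken inside $X_0$, which is exactly where the invariance $A(D(A))\subseteq X_0$ and the orthogonality of the splitting are used. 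The dissipativity and the eigenvalue exclusion are then essentially free consequences of the two propositions.
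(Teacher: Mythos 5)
Your proof is correct and follows essentially the same route as the paper: dissipativity and the eigenvalue exclusion are inherited from Propositions \ref{prop:dis} and \ref{prop:eigen}, and the range condition is verified by taking the resolvent solution in $X$ and showing it already lies in $X_0$. Your orthogonal decomposition $p=p_0+c\,p_\infty$ is just a repackaging of the paper's one-line computation $\lambda\int_0^1(u_1+u_2)\,dx=\int_0^1(f_1+f_2)\,dx=0$ (i.e.\ pairing the equation with $p_\infty$), both resting on the same mass-conservation fact $A(D(A))\subseteq X_0$.
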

\begin{proof}
By Proposition \ref{prop:eigen} and the definition of $X_0$ \eqref{def-X0}, we deduce the statement on eigenvalue.

Since $X_0$ is a subspace of $X$, it is easy to see $A$ is still dissipative on $X_0$. To show $A$ is m-dissipative on $X_0$, it remains to check that for all $\lambda>0$, it holds that for all $f$ in $X_0$, there exists $u$ in $X_0$ such that $(\lambda-A)u=f$. Since $A$ is m-dissipative in $X$ by Proposition \ref{prop:dis}, there exists some $u\in X$ satisfying this. By a straightforward integration of the equation $(\lambda-A)u=f$, we get $$\lambda\int_{0}^{1}(u_1+u_2)dx=\int_{0}^{1}(f_1+f_2)dx=0.$$
This shows that $u$ is actually in $X_0$, which finishes the proof.
\end{proof}
Now we begin the proof of Theorem 1.
\begin{proof}[Proof of Theorem 1]
Notice that the first part of the theorem has already been proved in Proposition \ref{prop:steady}.

For the second part, by Theorem \ref{thm:wei}, it suffices to show that $\Psi(A)$ defined as follows is strictly positive,
\begin{equation}
    \Psi(A):=\inf \{\|(A-i\lambda)p\|:p\in D(A)\cap X_0,\|p\|=1,\lambda\in \mathbb{R}\}.
\end{equation}

We argue by contradiction. The idea is to use a compactness argument to derive a contradiction with Assumption \ref{as:b}. 

Suppose $\Psi(A)=0$, then there exists a real sequence $\{\lambda_n\}$ and a sequence $\{p_n\}$ such that $p_n\in D(A)\cap X_0$, $\|p_n\|=1$ and we have
\begin{equation}\label{pf-tmp-0}
   Ap_n-i\lambda_n p_n:=f_n\rightarrow0,\quad in\ X_0.
\end{equation}
Recall in Lemma \ref{Lemma:1}, the operator $A$ can be restricted as a m-dissipative operator on $X_0$. And the norm of $X_0$, inherited from $X$ \eqref{def-inner}, is equivalent to the usual $L^2$ norm.

We divide the discussion into two cases.

Case 1: $\{\lambda_n\}$ is bounded. Up to a subsequence, we might as well take $\lambda_n$ convergence to $\lambda^*\in\mathbb{R}$. In this case
\begin{equation}\label{tmp-pf}
    \p_x(b_jp_{j,n})=i\lambda_n p_{j,n}+p_{j+1,n}-p_{j,n}+f_{j,n},\quad j=1,2,
\end{equation} is also bounded in $L^2$. Apply the compact embedding from $H^1$ to $L^2$ on $b_jp_{j,n}$, we can take $b_jp_{j,n}$ converges strongly in $L^2$ up to extraction of subsequences. Therefore by Assumption \ref{as:b}, we get $p_n$ converges to some $p^*\in X_0$ strongly in $L^2$. Then using \eqref{tmp-pf} again we obtain the strong convergence of $\p_x(b_jp_{j,n})$ in $L^2$. Now taking the limit in \eqref{tmp-pf}, we deduce that $p^*\in D(A)\cap X_0$ and
\begin{equation*}
    Ap^*=i\lambda^* p^*.
\end{equation*}  Note that $p^*\neq0$ from the strong convergence of $p_n$. However the above equation shows that $p^*\in X_0$ is an eigenfunction of $A$ with the eigenvalue $i\lambda^*$, which contradicts with Lemma \ref{Lemma:1}. 

Case 2: When $\{\lambda_n\}$ is not bounded, it suffices to consider the case $\lambda_n\rightarrow+\infty$ without loss of generality. By the relative entropy estimate \eqref{relative-entropy},
\begin{align*}
    \frac{1}{2}\int_0^{1}(p_{1,\infty}+p_{2,\infty})|h_{1,n}-h_{2,n}|^2dx&=-\re (Ap,p)\\&=-\re(i\lambda_np_n+f_n,p_n)\\&=-\re(f_n,p_n)\leq \|f_n\|\|p_n\|\rightarrow 0,
\end{align*} as $n$ goes to infinity. Therefore we have
\begin{equation}\label{con-h}
    h_{1,n}-h_{2,n}\rightarrow 0,\quad \text{in}\ L^2.
\end{equation}Then we write out \eqref{pf-tmp-0} in terms of $h$,
\begin{equation*}
    -i\lambda_np_{1,\infty}h_{1,n}+p_{2,\infty}(h_{2,n}-h_{1,n})-(\p_xh_1)b_1p_{1,\infty}=f_{1,n},
\end{equation*} combined which with \eqref{con-h} we deduce
\begin{equation*}
    -i\lambda_nh_1-\p_xh_1b_1\rightarrow0,\quad \text{in }L^2,
\end{equation*} which is equivalent to 
\begin{equation}\label{con-eh}
    \p_x\left(\exp\left(\int_0^{x}i\lambda_nb_1^{-1}dw\right)h_{1,n}\right)\rightarrow 0,\quad \text{in }L^2.
\end{equation}
Using \eqref{con-eh} and the compact embedding from $H^1$ to $L^2$ on $\exp\left(\int_0^{x}i\lambda_nb_1^{-1}dw\right)h_{1,n}(x)$, we deduce that there exists some constant $c_1$ such that
\begin{equation}\label{pf-tmp-h1}
    \exp\left(\int_0^{x}i\lambda_nb_1^{-1}dw\right)h_{1,n}-c_1\rightarrow 0,\quad \text{in }L^2.
\end{equation}
Similarly we deduce that for $h_2$, there exists some constant $c_2$ such that
\begin{equation}\label{pf-tmp-h2}
    \exp\left(\int_0^{x}i\lambda_nb_2^{-1}dw\right)h_{2,n}-c_2\rightarrow 0,\quad \text{in }L^2.
\end{equation}
By the upper bound on $p_{\infty}$ in Proposition \ref{prop:steady}, we get $\|h_n\|\geq \frac{1}{C}\|p_n\|=\frac{1}{C}$ for some $C>0$ and therefore $|c_1|+|c_2|\neq0$ (otherwise $h_n$ strongly converges to zero in $L^2$). 

In light of \eqref{con-h}, \eqref{pf-tmp-h1} and \eqref{pf-tmp-h2}, we derive that
\begin{equation}\label{pf-tmp-3}
    c_1-\exp\left(\int_0^{x}i\lambda_n(b_1^{-1}-b_2^{-1})dw\right)c_2\rightarrow 0,\quad \text{in }L^2.
\end{equation}

Now we calculate the standard $L^2$ norm $\|\cdot\|_{L^2[0,1]}$
\begin{align*}
    \left\| c_1-\exp\left(\int_0^{v}i\lambda_n(b_1^{-1}-b_2^{-1})dw\right)c_2\right\| _{L^2[0,1]}^2&\geq |c_1|^2+|c_2|^2\\&\ \ \ -2|c_1c_2|\left|\int_0^{1}\exp\left(\int_0^{x}i\lambda_n(b_1^{-1}-b_2^{-1})dw\right)dx\right|\\&\geq |c_1|^2+|c_2|^2-2|c_1c_2|\geq 0.
\end{align*}

By the calculation above together with the limit \eqref{pf-tmp-3}, we deduce that $|c_1|=|c_2|\neq 0$ and
\begin{equation}\label{pf-tmp-4}
    \left|\int_0^{1}\exp\left(\int_0^{x}i\lambda_n(b_1^{-1}-b_2^{-1})dw\right)dx\right|\rightarrow 1,\quad 
\end{equation} 
as $n$ goes to infinity.

By Assumption \ref{as:b}, as a continuous function,  $b_1^{-1}-b_2^{-1}\neq 0$ at some $x^*\in[0,1]$. Then we conclude that \eqref{pf-tmp-4} is impossible with $\lambda_n\rightarrow+\infty$ by a stationary phase estimate, see Lemma \ref{Lemma:stationary phase} below.
\end{proof}
\begin{lemma}\label{Lemma:stationary phase}
For a nonzero, real-valued function $\psi\in C[0,1]$ and a real sequence $\lambda_n\rightarrow +\infty$, as $n$ goes to infinity, we have
\begin{equation}\label{pf-tmp-5}
    \limsup_{n\rightarrow \infty}\left|\int_0^{1}\exp\left(\int_0^{x}i\lambda_n\psi(w)dw\right)dx\right|<1.
\end{equation}
\end{lemma}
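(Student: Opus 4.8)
The plan is to denote the phase by $\Phi(x):=\int_0^x\psi(w)\,dw\in C^1[0,1]$ and set $I_n:=\int_0^1 e^{i\lambda_n\Phi(x)}\,dx$, so that $|I_n|\le\int_0^1|e^{i\lambda_n\Phi(x)}|\,dx=1$ holds trivially. The point is that this bound can be saturated only when the integrand has an (almost everywhere) constant phase, i.e. when $\Phi$ is constant; since $\psi\not\equiv 0$ is continuous, $\Phi$ is strictly monotone on some subinterval, and the rapid oscillation of $e^{i\lambda_n\Phi}$ there produces cancellation forcing $|I_n|$ strictly below $1$ in the limit. Concretely, I would isolate a subinterval on which the phase is strictly monotone, show that its contribution to $I_n$ vanishes as $\lambda_n\to\infty$, and bound the contribution of the complement by its length, which is $<1$.

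First, since $\psi$ is continuous and not identically zero, pick $x_0$ with $\psi(x_0)\neq 0$; replacing $\psi$ by $-\psi$ if necessary (which conjugates $I_n$ and hence leaves $|I_n|$ unchanged), assume $\psi(x_0)>0$. By continuity there is a nondegenerate interval $[a,b]\subseteq[0,1]$ with $a<b$ and $\psi\ge\delta>0$ on $[a,b]$ (if $x_0$ is an endpoint one simply takes a one-sided neighbourhood). On $[a,b]$ the phase $\Phi$ is $C^1$ with $\Phi'=\psi\ge\delta>0$, hence a strictly increasing diffeomorphism onto $[\Phi(a),\Phi(b)]$; the change of variables $u=\Phi(x)$ gives
\[
\int_a^b e^{i\lambda_n\Phi(x)}\,dx=\int_{\Phi(a)}^{\Phi(b)} e^{i\lambda_n u}\,g(u)\,du,\qquad g(u):=\frac{1}{\psi\bigl(\Phi^{-1}(u)\bigr)},
\]
where $g$ is continuous, hence in $L^1[\Phi(a),\Phi(b)]$. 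By the Riemann--Lebesgue lemma the right-hand side tends to $0$ as $\lambda_n\to+\infty$.

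Finally, splitting $I_n$ over $[a,b]$ and its complement and using the triangle inequality together with $|e^{i\lambda_n\Phi}|=1$,
\[
|I_n|\le\Bigl|\int_{[0,1]\setminus[a,b]}e^{i\lambda_n\Phi}\,dx\Bigr|+\Bigl|\int_a^b e^{i\lambda_n\Phi}\,dx\Bigr|\le\bigl(1-(b-a)\bigr)+\Bigl|\int_a^b e^{i\lambda_n\Phi}\,dx\Bigr|,
\]
so that $\limsup_{n\to\infty}|I_n|\le 1-(b-a)<1$, as claimed. The only delicate point is that Assumption~\ref{as:b} (and the accompanying Remark) only guarantee $\psi\in C[0,1]$, so one cannot integrate by parts to extract decay; the right substitute is to reduce, via the monotone change of variables, to a genuine Fourier integral with a merely $L^1$ amplitude and then invoke Riemann--Lebesgue. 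Everything else is elementary.
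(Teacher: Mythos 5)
Your proof is correct and follows essentially the same route as the paper's: isolate a subinterval $[a,b]$ on which $\psi$ is bounded away from zero, perform the monotone change of variables $u=\Phi(x)$ to reduce that piece to a Fourier integral with continuous (hence $L^1$) amplitude $1/\bigl(\psi\circ\Phi^{-1}\bigr)$, kill it by Riemann--Lebesgue, and bound the complementary contribution by its length $1-(b-a)<1$. The paper's argument is identical in substance, down to the same change of variables and the same appeal to the Riemann--Lebesgue lemma.
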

\begin{proof}[Proof of Lemma \ref{Lemma:stationary phase}]
We can find an interval $[a,b]\subset[0,1]$ where $\psi$ does not vanish.
Since $|e^{\int_0^{x}i\lambda_n\psi dw}|\leq 1$, it suffices to show that 
\begin{equation}\label{tmp-lm2}
     \lim_{n\rightarrow \infty}\left|\int_a^{b}\exp\left(\int_0^{x}i\lambda_n\psi(w)dw\right)dx\right|=0<b-a.
\end{equation}
We assume $\psi(x)>0$ on $[a,b]$ without loss of generality. Consider a change of variable $dy=\psi(x)dx$, $y=\int_a^{x}\psi(w)dw$, then \eqref{tmp-lm2} is equivalent to 
\begin{equation*}
         \lim_{n\rightarrow \infty}\left|\int_0^{\bar{y}}e^{i\lambda_ny}\frac{1}{\psi(x(y)))}dy\right|=0,
\end{equation*} where $\bar{y}=\int_a^{b}\psi(w)dw$ and $x(y)$ stands for the inverse change of variable from $dy=\psi(x)dx$. Then the result follows from the Riemann-Lebesgue lemma since $\frac{1}{\psi(x(y)))}$ is a continuous function on $[0,\bar{y}]$.
\end{proof}
\section{Discussion}

In this article, we prove the exponential convergence to equilibrium of \eqref{eq:system}. We use a non-constructive compactness argument, which does not give an explicit decay rate. As explained in the introduction, it seems difficult to adapt constructive hypocoercivity methods in literature to \eqref{eq:system}. We may study more explicit estimates on the decay rate in the future.

The assumption (Assumption \ref{as:b}) on the difference between $b_1,b_2$ is fairly weak. In fact, this condition is sharp: if otherwise $b_1\equiv b_2=:b$ then $\bar{p}:=p_1+p_2$ would satisfy a pure transport equation
\begin{equation*}
\begin{cases}
    \p_t\bar{p}+\p_x(b(x)\bar{p})=0,\quad &x\in(0,1),\, t>0,\\
    b(0)\bar{p}(t,0)=b(1)\bar{p}(t,1),\quad & t>0.
\end{cases}
\end{equation*} Therefore no convergence to equilibrium can be expected in general. Actually, in this case two kinds of particles are indistinguishable in the sense that they are driven by a same velocity field. In view of the general formulation \eqref{eq:general}, when $b_1\equiv b_2$, the transport operator $\mathsf{T}$ and the collision operator $\mathsf{C}$ commute, therefore no interplay can be used to compensate the lack of coercivity.

\paragraph{Extension to ``continuous'' systems \eqref{eq:neuron},\eqref{eq:fluid}}
Although our approach for the two-speed model \eqref{eq:system} serves as a preliminary study in treating a non-separable velocity field, extending the result to \eqref{eq:neuron} or \eqref{eq:fluid} is far from straightforward. Our proof of Theorem \ref{thm:main} benefits from the fact that the collision operator $\mathsf{C}$ in \eqref{eq:system} is a bounded operator, which is not the case in \eqref{eq:neuron} or \eqref{eq:fluid}. The essential difficulty may be that we have to deal with a \textit{hypoelliptic} boundary value problem, either in the steady state equation $Ap_{\infty}=0$ or in the resolvent estimate $(A-i\lambda)u=f$. However, for a {hypoelliptic} boundary value problem, a general well-posedness and regularity theory is lacking \cite{armstrong2019variational}, in contrast to their elliptic counterparts.

\paragraph{Extension with a non-homogeneous cross-section}
Another interesting extension is to consider a non-homogeneous cross-section $\sigma(x)\geq 0$ on the collision term in \eqref{eq:system}
\begin{equation}\label{extension:system}
    \begin{cases}
    \p_t p_1+\p_x(b_1(x)p_1)=-\sigma(x)(p_1-p_2),&  x\in(0,1),\,t>0,\\
        \p_t p_2+\p_x(b_2(x)p_2)=\sigma(x)(p_1-p_2), &x\in(0,1),\,t>0,\\
        b_i(0)p_i(t,0)=b_i(1)p_i(t,1), & i=1,2,\,t>0,\\
        p_i(0,x)=p_{i,\init}(x),& x\in[0,1],\,i=1,2.
    \end{cases}
\end{equation}
For the Goldstein-Taylor model, i.e., $b_1\equiv1,b_2\equiv-1$, such an extension has been studied in \cite{dietert2021finding,bernard2013optimal,arnold2021large,tran2013convergence}. For non-homogeneous cases, the approach for \eqref{eq:system} may directly extend to \eqref{extension:system} provided the following assumption on $b_1,b_2,\sigma$ holds,
\begin{assumption}\label{as:extend}
We assume $b_1,b_2,\sigma \in C[0,1]$ satisfying 
\begin{equation}
    b_i(x)\neq0,\, i=1,2,\quad \sigma(x)\geq 0,\quad\forall x\in[0,1],
\end{equation} and there exists $x^*\in[0,1]$ such that 
\begin{equation}
    b_1(x^*)-b_2(x^*)\neq 0,\quad \sigma(x^*)\neq 0.
\end{equation}
\end{assumption}
In Assumption \ref{as:extend} we allow degeneracy of $b_1-b_2$ and $\sigma$, but they need to be simultaneously non-zero at one point $x^*$. Such an assumption can not be relaxed to that there exists two (possibly different) points $x^*,y^*\in[0,1]$ such at $b_1-b_2\neq 0$ at $x^*$ and $\sigma>0$ at $y^*$. Otherwise, one can construct examples such that the operator $A$ has a pure imaginary eigenvalue, following the proof of Proposition \ref{prop:eigen}. 

\section*{Acknowledgement}
Z. Zhou is supported by the National Key R\&D Program of China, Project Number 2021YFA1001200, 2020YFA0712000 and NSFC grant Number 12031013, 12171013. X.Dou is partially supported by The Elite Program of
Computational and Applied Mathematics for PhD Candidates in Peking University. We also thank
Jos\'e Carrillo, Beno\^it Perthame,  Xiaoqian Xu and Zhifei Zhang for helpful discussions.

\bibliography{resolvent.bib}
\bibliographystyle{plain}
\end{document}